\def\namedlabel#1#2{\begingroup
    #2%
    \def\@currentlabel{#2}%
    \phantomsection\label{#1}\endgroup
}
\newtheorem{defi}{Definition}[section]
\newtheorem{rem}{Remark}[section]
\newtheorem{thm}{Theorem}[section]
\newtheorem{lem}[thm]{Lemma}
\newtheorem{prop}[thm]{Proposition}
\newcommand{\norm}[1]{\Vert #1 \Vert}
\newcommand{\abs}[1]{\left| #1 \right|}
\renewcommand{\div}{\mathrm{div}\,}
\newcommand{\vect}[1]{\mathrm{span}( #1 )}
\newcommand{\ie}{\emph{i.e. }}
\newcommand{\cf}{\emph{cf }}
\def\T #1{\mathcal{T} #1 }
\def\L #1{\mathrm{L}^{ #1 }}
\def\H #1{\mathrm{H}^{ #1 }}
\def\W #1{\mathrm{W}^{ #1 }}
\def\V{\mathrm{V}}
\def\d{\mathrm{d}}
\title{Existence theorem for homogeneous incompressible Navier-Stokes equation with variable rheology}
\author{Laurent Chupin\thanks{Universit\'e Blaise Pascal, Clermont-Ferrand II, Laboratoire de Math\'ematiques CNRS-UMR 6620, Campus des C\'ezeaux, F-63177 Aubi\`ere cedex, France ({\tt laurent.chupin@math.univ-bpclermont.fr}).}
        \and \& \and Jordane Math\'e\thanks{Universit\'e Blaise Pascal, Clermont-Ferrand II, Laboratoire de Math\'ematiques CNRS-UMR 6620 and Laboratoire Magmas et Volcans (LMV) ({\tt jordane.mathe@math.univ-bpclermont.fr}).} }
\begin{document}
\maketitle

\begin{abstract}
We look at a homogeneous incompressible fluid with a time and space~variable rheology of Bingham type, which is governed by a coupling equation. Such a system is a simplified model for a gas-particle mixture that flows under the influence of gravity. The main application of this kind of model is pyroclastic flows in the context of volcanology. In order to prove long time existence of weak solutions, classical Galerkin approximation method coupled with \emph{a priori} estimates allows us to partially solve the problem. A difficulty remains with the stress tensor, which must satisfy an implicit constitutive relation. Some numerical simulations of a flow of this type are given in the last section. These numerical experiments highlight the influence of the fluidization phenomenon in the flow.
\end{abstract}

\smallskip

\noindent \textbf{Keywords.} weak solution, existence result, variable rheology, Bingham fluids, Galerkin method, monotone graph

\section{Introduction}
Dense pyroclastic flows are fast-moving density currents emitted by volcanoes that are particularly destructive. 
They are composed of high temperature crushed rocks ranging in size from micrometers to meters. 
They are granular flows but they show a very high fluidity: flows thinner than one meter can reach run-out distances longer than 10 kilometers and they are able to flow on horizontal surfaces. 
One objective of modern volcanology is to better understand the behavior of this type of flows for a better assessment of volcanic hazards. 
A possible explanation of the high mobility is the interaction between the fine particles of the flow and volcanic or atmospheric gases. 
Some physical experiments have been made on the collapse of a fluidized dense granular column in a U-shaped channel, see for instance~\cite{I1,PR} or~\cite{R}.
In these experiments, the granular bed is initially fluidized by injection of air through the column, which generates high interstitial pore fluid pressure so that the released granular material flows like an inertial single-phase fluid.
In this context, a lot of mathematical models have been proposed to take into account the change of rheology all along the flow: \cite{AT,ATMC,DP,FP,I2,MTVAB,CM,VTA}.
A first category concerns bi-fluids systems, in which each phase is described by a Navier-Stokes type equation added to a coupling term (see~\cite{CM}). 
An important problem with this kind of model is their heaviness in terms of numerical computations.
This is why plenty of models are of Shallow-Water type, provided the length of the flow is much bigger than the thickness, see~\cite{I2}.
This method allows to reduce a $d$-dimensional problem to a $(d\!-\!1)$-dimensional one, and is then much more efficient for computational experiments. But the cost is the loss of information in the direction which seems to be crucial to understand the behavior of fluidized granular flows.
In order to overcome the problems above it is possible to consider a homogeneous incompressible fluid with a rheology depending on a parameter (called \emph{order parameter} in \cite{ATMC,MTVAB,VTA}), which is supposed to satisfy an equation describing its evolution.
For such a model, the equations governing the hydrodynamics are the Navier-Stokes equations including an extra-stress tensor. 
They read
\begin{equation*}
\left\{
\begin{aligned}
& \rho \, \big( \partial_t v + v \cdot \nabla  v \big) + \nabla p - \eta \, \Delta v = \div \sigma + \rho \, g, \\
& \div v = 0,
\end{aligned}
\right.
\end{equation*}
where~$v$ is the velocity, $p$ the pressure and~$\sigma$ is the extra-stress tensor. The vector~$g$ characterizes the gravity forces, the coefficient~$\eta>0$ refers to the Newtonian viscosity and~$\rho > 0$ is the constant density of the mixture.
Note that the constant density of the mixture can be expressed as the linear combination of the constant density of each component of the mixture:
$$
\rho = \pi \rho_s + (1-\pi) \rho_f ,
$$ 
where~$\rho_f$ is the constant density of the gas,~$\rho_s$ the constant density of the solid particles and~$\pi$ is the mass fraction of gas in the mixture.
In order to take into account the behaviour of dense granular flows, the extra-stress~$\sigma$ depends not only on the symmetric part of the velocity gradient~$Dv$ but also on an order-parameter~$q$ evolving with the flow (\cf \cite{perso}):
$$
\sigma = \sigma(q, Dv) \qquad \text{with} \qquad Dv = \frac{\nabla v + (\nabla v )^T}{2}.
$$
More precisely, the function~$\sigma$ is defined using a Bingham type relation:
\begin{equation}\label{bingham}
 \left\{
\begin{aligned}
& \sigma(q, Dv) = q \, \frac{Dv}{\abs{Dv}} && \quad \text{ if } Dv \neq 0 ,\\
& \abs{\sigma(q, Dv) }\leq q            && \quad \text{ if } Dv = 0,
\end{aligned}
\right.
\end{equation}
where for any tensor~$A$, $\abs{A} = \sqrt{A:A} = \sqrt{\sum_{i,j}A_{i,j}^2}$.
In other words, we want to impose a solid behaviour when the stress is smaller than a plasticity yield, named~$q$.
In the model presented in this paper, contrary to the ones in~\cite{ATMC,MTVAB,VTA}, the order-parameter depends on the interstitial pore fluid pressure, denoted by~$p_f$.
It means that in this model the gas present between solid particles only acts on the global dynamic \emph{via} its local pressure.
More precisely, our order parameter is in fact the plasticity yield defined as a function of the difference between~$p_f$ and a solid pressure~$p_s$ as follows:
\begin{equation}
\label{def q}
q = q_0 (p_s - p_f)^+, \qquad q_0>0.
\end{equation}
This choice means that when the fluid pressure compensates the solid pressure then the mixture is Newtonian ($q=0$), whereas when fluid pressure is lower then plasticity effects happen ($q>0$).
The model must be completed by the relation giving such pressures~$p_f$ and~$p_s$. 

$\bullet$ Let's begin with the pore fluid pressure~$p_f$.
As in~\cite{R1}, some experiments show that, in the static case, the evolution of this pressure is given by a diffusion law.
To understand what happens in a flowing case, one can follow Iverson and Denlinger's inference, which gives an advection-diffusion law for the behaviour of the basal pore pressure in a shallow-water type model, see~\cite{I2}.
Same phenomenological observations allow us to say that the fluid pressure~$p_f$ verifies, in our model, an advection-diffusion equation like:
$$ \partial_t p_f + v\cdot \nabla p_f - K \, \Delta p_f = 0 ,$$
\indent $\bullet$ We assume that the solid pressure~$p_s$ only depends on the gravity.
More precisely, it corresponds to the pressure that imposes a column of grain particles anywhere in the bed.
Then it can be explicitly defined as the lithostatic solid pressure, \ie 
\begin{equation}\label{def_ps}
p_s = {p_s}_0 \, + \, \rho \, |g| \, (y_0-y), 
\end{equation}
where~${p_s}_0 \geq 0$ is a reference pressure given at the altitude~$y = y_0$ ($y$ begin the vertical coordinate).

\begin{rem}
Note that there are three pressures in the model: $p_s$ is the lithostatic (or hydrostatic) pressure, $p_f$ is the pressure of the gas which fills the gap between solid particles, and $p$ is the hydrodynamic pressure of the mixture.
As a result, in the particular case of a static state of the solid matter, $p$ equals $p_s$.
In~\cite{R1}, this solid pressure represents the minimum fluidization pressure measured on the base of a static grain column during its fluidization.
More precisely, this is the pressure of the gas once the velocity of injection is sufficient to impose an upward drag force which counterbalances the weight of particles.
But there is no \emph{a priori} relation between $p_s$, $p_f$ and $p$ in the general case.
\end{rem}

Finally, the dynamic of such a fluid is governed by a system of three equations with three unknowns:~$v$, $p$ and $p_f$, which is
\begin{equation}\label{modelsimpli0}
\left\{
\begin{aligned}
& \rho \, \big( \partial_t v + v \cdot \nabla v \big) + \nabla p - \eta \, \Delta v = q_0 \, \div \Big( (p_s - p_f)^+ \frac{Dv}{\abs{Dv}} \Big) + f, \\
& \div v = 0, \\[0.2cm]
& \partial_t p_f + v \cdot \nabla p_f - K \, \Delta p_f = 0,
\end{aligned}
\right.
\end{equation}
where~$p_s$ is the function given by~\eqref{def_ps} and~$f$ is a source term taking into account all exterior forces, for instance the gravity forces when~$f = \rho g$. 
In the case where the solid pressure and fluid pressure are equal ($p_s=p_f$) then we obtain a pure viscous flow (modeled by the incompressible Navier-Stokes equations).
If the difference between these two pressures is assumed to be constant then we obtain the classical Bingham model.
These two regimes are already well known, see for example~\cite{B-F}. 
In order to solve the existence problem for such complex fluids, a general theory has been developed and studied in~\cite{B-G-M-S}, in which the authors prove the existence of a weak solution for a general class of rheology including fluids of a Bingham and Herschel-Bulkley type and depending on both time and space. 
But this framework does not include our case based on coupling equations.\\

\noindent
In this article, we generalize these results to our physical model by treating the coupling relation through the definition of the order-parameter.
Even if the result given in~\cite{B-G-M-S} is true in the three-dimensional case, according to Remark~\ref{3D}, we will restrict the study to the two-dimensional case in this article.\\

\noindent
This paper is organized as follows. 
In Section~\ref{section1}, we introduce the mathematical formulation of the problem and give the main result: the existence of a weak solution.
The Section~\ref{section2} is devoted to the proof of this result: a construction of approximated solutions is given in the Subsection~\ref{section2-1}. 
We next prove uniform estimates in the Subsection~\ref{section2-2}, and we finally deduce the existence result using a limit process in the Subsection~\ref{section2-3}.
In the last section, we give some numerical results on the classical Poiseuille-type flow in a 2D-channel, that show the impact of the fluidization on the behaviour of a Bingham fluid.

\section{Mathematical framework and main result}\label{section1}

In order to describe the notion of solution to the system~\eqref{modelsimpli0}, we need to introduce suitable functional spaces.

\subsection{Functional spaces}\label{Functional spaces}

In the sequel, the fluid domain $\Omega\subset \mathbb R^2$ is a bounded connected open set with a smooth boundary denoted by~$\partial \Omega$.
Its outward unit normal vector is denoted by~$n$.

$\checkmark$ According to classical notations, we will denote by~$\L{m}(\Omega)$, $m \geq 1$, Lebesgue spaces on~$\Omega$ and~$\H{s}(\Omega)$, or more generally~$\W{s,q}(\Omega)$ ($s>0$ and $q \geq 1$), Sobolev spaces on~$\Omega$.

$\checkmark$ To deal with boundary conditions and incompressibility constraint, we consider the following notation
\begin{equation*}
\begin{aligned}
& \H{1}_0(\Omega) = \{ g \in \big(\H{1}(\Omega)\big)^2 ~;~ g\vert_{\partial\Omega} = 0 \}, \\
& \H{} = \{ g \in \big(\L{2}(\Omega)\big)^2 ~;~ \div g = 0 ~ \text{and} ~ g\cdot n = 0 ~ \text{on $\partial \Omega$} \},\\
& \V = \{ g \in \big(\H{1}_0(\Omega)\big)^2 ~;~ \div g = 0 \}.
\end{aligned}
\end{equation*}
As usual, the dual space of~$\H{1}_0(\Omega)$ and~$V$ will be denoted by~$\H{-1}$ and~$V'$ respectively.

$\checkmark$ $\mathcal{D}(\Omega) = \mathcal{C}^\infty_c (\Omega)$ will be the notation for the set of infinitely differentiable functions with compact support on $ \Omega $.

$\checkmark$ For time-dependent functions, we denote by~$\L{m}X$ the set also denoted by~$ \L{m}(0,T;X)$, $0 < T < + \infty$ and $m \geq 1$, defined by 
$$\L{m}X = \{ g(t,\cdot) \in X \text{ for a. a. } t\in(0,T)  ~;~ t \mapsto \norm{g(t,\cdot)}_X \in \L{m}(0,T) \},$$
with~$\norm{\cdot}_X$ being a norm on~$X$, where~$X$ is any separable functional space as above. \\
\noindent In the same way, we denote by~$\mathcal C ^0 ([0,T],X)$ the set of time-continuous functions on~$[0,T]$ to the space~$X$.

$\checkmark$ For a given basis~$(x_i)_i$ of any functional space~$X$ from above, we will denote by~$X_k$ the finite dimensional subspace of~$X$ of the form
$$ X_k = \vect{x_1, \dots, x_k}.$$

$\checkmark$ Finally, because lots of tensors used in this paper are symmetric, we will denote this with a subscript
$$
\L{m}_{sym} = \{ A \in {\L{m}(\Omega)}^{2 \times 2} ~;~ A \text{ is symmetric}\},
$$
and the set of $2 \times 2$ real symmetric matrices will be denoted by~$S_2(\mathbb{R})$.

\subsection{Rigorous formulation of the System~\eqref{modelsimpli0}}

In System~\eqref{modelsimpli0}, one element is, at this stage, not completely well defined. 
Indeed, the fraction~$\frac{Dv}{|Dv|}$ makes no sense on the (rigid) set~$\left\{ Dv=0 \right\}$.
Therefore, instead of using definition~\eqref{bingham} for $\sigma(q,Dv)$, it is preferred to use formulation below:
\begin{equation}\label{bingham1}
\left\{
\begin{aligned}
& \sigma : Dv = q \abs{Dv} ,\\
& \abs{\sigma} \leq q.
\end{aligned}
\right.
\end{equation}

\begin{lem}
Both systems~\eqref{bingham} and~\eqref{bingham1} are equivalent, 
 more precisely for all $(\sigma, Dv) \in \big(\mathbb R^{2\times 2}\big)^2$ and for all $ q \geq 0 $, we have
$$
 \left\{
\begin{aligned}
& \sigma = q \, \frac{Dv}{\abs{Dv}} &&\text{if } Dv \neq 0\\
& \abs{\sigma}\leq q                &&\text{if } Dv = 0
\end{aligned}
\right.
~~ \Longleftrightarrow ~~
\left\{
\begin{aligned}
& \sigma : Dv = q \abs{Dv}\\
& \abs{\sigma} \leq q .
\end{aligned}
\right.
$$
\end{lem}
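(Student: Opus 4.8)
The plan is to prove the two implications separately, and in each direction to split according to whether $Dv$ vanishes or not.

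For the forward implication ($\Rightarrow$), suppose the left-hand system holds. If $Dv=0$, then the first line of the right-hand system reads $\sigma:Dv = 0 = q\,|Dv|$, which is trivially true, while the inequality $|\sigma|\le q$ is exactly the hypothesis. If $Dv\neq 0$, I substitute $\sigma = q\,Dv/|Dv|$ and compute directly: using $Dv:Dv=|Dv|^2$ one gets $\sigma:Dv = q\,(Dv:Dv)/|Dv| = q\,|Dv|$, and $|\sigma| = q\,|Dv|/|Dv| = q$, hence in particular $|\sigma|\le q$. This direction is purely computational.

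For the converse ($\Leftarrow$), suppose the right-hand system holds. The case $Dv=0$ is immediate, since then the only requirement of the left-hand system is $|\sigma|\le q$, which is assumed. The substantive case is $Dv\neq 0$: here I want to recover $\sigma = q\,Dv/|Dv|$ from the two relations $\sigma:Dv = q\,|Dv|$ and $|\sigma|\le q$. The idea is to run the Cauchy--Schwarz inequality for the Frobenius inner product backwards. One has the chain $q\,|Dv| = \sigma:Dv \le |\sigma|\,|Dv| \le q\,|Dv|$, so all the inequalities are in fact equalities. From $|\sigma|\,|Dv| = q\,|Dv|$ and $|Dv|\neq 0$ I get $|\sigma| = q$; from the equality case in Cauchy--Schwarz, together with $\sigma:Dv = |\sigma|\,|Dv|\ge 0$, I get that $\sigma$ and $Dv$ are parallel with a nonnegative factor, say $\sigma = \lambda\,Dv$ with $\lambda\ge 0$. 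Taking Frobenius norms yields $\lambda\,|Dv| = q$, that is $\sigma = q\,Dv/|Dv|$, as desired.

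The only point that needs a little care --- and which I regard as the \emph{hard part}, though it remains elementary --- is the equality case of the Cauchy--Schwarz inequality on $\mathbb{R}^{2\times 2}$ equipped with the Frobenius inner product $A:B$: one must recall that $A:B = |A|\,|B|$ forces $A$ and $B$ to be colinear with a nonnegative proportionality constant. The degenerate sub-cases ($q=0$, or $\sigma=0$) are automatically absorbed by this argument and require no separate treatment.
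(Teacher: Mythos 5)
Your proof is correct and takes essentially the same route as the paper: the only nontrivial direction is the converse, and you run the same Cauchy--Schwarz chain $q\,|Dv| = \sigma:Dv \le |\sigma|\,|Dv| \le q\,|Dv|$, extract $|\sigma|=q$, and invoke the equality case of Cauchy--Schwarz to force colinearity. The paper simply treats $q=0$ as a preliminary case and determines the proportionality factor by substituting $\sigma=aDv$ back into $\sigma:Dv = q|Dv|$, whereas you determine it via the sign of the inner product plus a norm computation --- but this is a cosmetic difference in the finishing step, not a different argument.
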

\begin{proof}
Let's consider first the case with~$q = 0$. 
In this case, both systems are reduced to $\sigma = 0$ so that they are equivalent.\\
Let's suppose now that $q > 0$.
It is obvious that the system on the left~\eqref{bingham} implies the one on the right \eqref{bingham1}.
To prove the converse, we assume that~\eqref{bingham1} is true and we split the proof into two cases:
\begin{itemize}
	\item Case 1: $Dv = 0$.\\
	      If $ Dv = 0$, then the second relation of the system~\eqref{bingham} is obviously true.
	\item Case 2: $Dv \neq 0$.\\
      	Else, taking the absolute value in the first relation of~\eqref{bingham1}, we compute the scalar equality
				$$\abs{\sigma : Dv} = q \abs{Dv} .$$
				Whereas applying the Cauchy-Schwarz inequality and using the second relation of~\eqref{bingham1}, we get 
				$$ \abs{\sigma : Dv} \leq \abs{\sigma} \abs{Dv} \leq q \abs{Dv} .$$
				This inequality, together with the previous equality, gives $ q \abs{Dv} \leq \abs{\sigma} \abs{Dv} \leq q \abs{Dv}$.
				Then remember that $Dv \neq 0$, so that we have $ \abs{\sigma} = q$ and thus
				$$
				\abs{\sigma : Dv} = \abs{\sigma} \abs{Dv} .
				$$
				In this case, using a corollary of Cauchy-Schwarz inequality, we also know that the tensors $\sigma$ and $ Dv $ are linearly dependent.
				That means there is a scalar~$a$ such that $\sigma = a Dv$.
				Using this expression together with the first line of the system~\eqref{bingham1}, we conclude that this scalar exactly corresponds to~$a = \frac{q}{\abs{Dv}}$, which concludes the proof.
\end{itemize}
\end{proof}

\noindent
Thereby, the problem is formulated as follows.
Being given initial data velocity~$v_0$, initial fluid pressure~${p_f}_0$, a source term~$f$ and a time $T>0$, we look for a velocity vector field~$v$, two scalar pressures~$p$ and~$p_f$, and a symmetric stress tensor~$\sigma$ such that
\begin{equation}\label{modelsimpli}
\left\{
\begin{aligned}
& \rho \big( \partial_t v + v \cdot \nabla v \big) + \nabla p - \eta \, \Delta v = \div \sigma + f && \text{in} ~ \Omega \times (0,T), \\
& \div v = 0 && \text{in} ~ \Omega \times (0,T), \\
& \partial_t p_f + v \cdot \nabla p_f - K \, \Delta p_f = 0 && \text{in} ~ \Omega \times (0,T), \\
& q=q_0 \, (p_s - p_f)^+ ~\text{where $p_s$ is defined by~\eqref{def_ps}} \qquad && \text{in} ~ \Omega \times (0,T), \\
& (\sigma, Dv, q) ~\text{satisfies~\eqref{bingham1}} && \text{in} ~ \Omega \times (0,T), \\
& (v, \, p_f)\vert_{t=0} = (v_0, \, {p_f}_0) && \text{in} ~ \Omega, \\
& (v, \, p_f) = (0, \, 0) && \text{in} ~ \partial \Omega \times (0,T).
\end{aligned}
\right.
\end{equation}
We would like to point out that, as an assumption, we restrict the boundary conditions for the interstitial pressure~$p_f$ and the velocity~$v$ to homogeneous Dirichlet conditions.
Note that this restriction is essentially due to mathematical convenience when working with the variational formulation and that adaptation to a more general case will be not treated in this paper.

\subsection{Definition of a weak solution and existence theorem}

In order to work with non-regular solutions, and then to show global existence in time result, we use a weak formulation of the system~\eqref{modelsimpli}.
The weak interpretation of the Navier-Stokes equation being standard, we only detail the weak formulation of the condition~\eqref{bingham1}.
For any bounded function~$q$, we introduce the following graph:
\begin{equation}\label{definition-Gq}
\mathcal{G}[q] = \left \lbrace (\sigma,Dv) \in \L{2}\L{2}_{sym} \times (\L{\infty}\H{-1}\cap\L{2}\L{2}) \,;\, \text{$(\sigma, Dv, q)$ satisfies~\eqref{bingham1} a.e.}\right\rbrace,
\end{equation}
and we say that $(\sigma, Dv, q)$ satisfies~\eqref{bingham1} in a weak sense if $(\sigma,Dv)\in \mathcal{G}[q]$. 
One of the most important properties of the graph~$\mathcal{G}[q]$ is its monotony. 
More exactly, we have the following result which has been first introduced in this context in~\cite{B-G-M-S} and is fundamental at the end of the proof (see Section~\ref{limit-graph}-Step~2~(b).3) to reach some limits in the graph~$\mathcal{G}[q]$. 
\begin{lem}\label{graphe positif} 
Let~$q$ be a given bounded function.
If $(\sigma_1,D_1)$ and $(\sigma_2,D_2)$ are in~$\mathcal{G}[q]$, then 
$$ (\sigma_1 - \sigma_2):(D_1 - D_2) \geq 0 \quad \text{ a.e.}$$
\end{lem}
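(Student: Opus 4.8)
The plan is to reduce the assertion to a pointwise inequality, holding for almost every $(x,t)\in\Omega\times(0,T)$, and to derive it from the two scalar relations in~\eqref{bingham1} combined with the Cauchy--Schwarz inequality for the matrix inner product $A:B$. Fix a point at which both $(\sigma_1,D_1,q)$ and $(\sigma_2,D_2,q)$ satisfy~\eqref{bingham1}. Observe first that membership in $\mathcal{G}[q]$ forces $0\le\abs{\sigma_i}\le q$, hence $q\ge 0$ at that point, and that $\sigma_i\in\L{2}\L{2}_{sym}$ and $D_i\in\L{2}\L{2}$ are genuine matrix-valued functions; consequently the scalar products written below are defined almost everywhere and the inequality obtained will be an almost-everywhere statement.

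Next I would simply expand the bilinear form,
\begin{equation*}
(\sigma_1-\sigma_2):(D_1-D_2)=\sigma_1:D_1+\sigma_2:D_2-\sigma_1:D_2-\sigma_2:D_1,
\end{equation*}
treat the two diagonal terms with the first line of~\eqref{bingham1}, namely $\sigma_1:D_1=q\,\abs{D_1}$ and $\sigma_2:D_2=q\,\abs{D_2}$, and bound each cross term by Cauchy--Schwarz followed by the second line of~\eqref{bingham1}:
\begin{equation*}
\sigma_1:D_2\le\abs{\sigma_1}\,\abs{D_2}\le q\,\abs{D_2},\qquad \sigma_2:D_1\le\abs{\sigma_2}\,\abs{D_1}\le q\,\abs{D_1}.
\end{equation*}
Substituting these four relations gives $(\sigma_1-\sigma_2):(D_1-D_2)\ge q\abs{D_1}+q\abs{D_2}-q\abs{D_2}-q\abs{D_1}=0$, which is exactly the claimed inequality.

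I do not expect a genuine obstacle here: the statement is a purely algebraic consequence of the constitutive relation~\eqref{bingham1}, needing no discussion of the rigid set $\{D_i=0\}$ and no structural information on the underlying velocity field. The only thing worth stating with some care is the measurability/integrability remark above, which is what makes ``a.e.'' meaningful. It is nonetheless useful to record \emph{why} such an elementary monotonicity is singled out: at the limit stage of the Galerkin approximation one controls only weak convergences of the approximate stresses and strain rates, and monotonicity of $\mathcal{G}[q]$ is precisely the ingredient that allows a Minty-type argument to identify the weak limit as an element of $\mathcal{G}[q]$.
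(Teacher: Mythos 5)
Your proof is correct and is essentially identical to the paper's: both expand $(\sigma_1-\sigma_2):(D_1-D_2)$ into the four terms, use $\sigma_i:D_i=q\,\abs{D_i}$ for the diagonal terms, and control the cross terms via Cauchy--Schwarz together with $\abs{\sigma_i}\le q$. The added remarks on measurability and on the role of monotonicity in the Minty-type limit argument are accurate side comments but do not change the argument.
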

\begin{proof}
Suppose that $(\sigma_1,D_1,q)$ and $(\sigma_2,D_2,q)$ satisfy~\eqref{bingham1}. 
We have
$$
\begin{aligned}
(\sigma_1 - \sigma_2):(D_1 - D_2) 
& = \sigma_1 : D_1 + \sigma_2 : D_2 - \sigma_1 : D_2 - \sigma_2 : D_1 \\
& = q \abs{D_1} + q \abs{D_2} - \sigma_1 : D_2 - \sigma_2 : D_1.
\end{aligned}
$$
Using the Cauchy-Schwarz inequality we deduce
$$
(\sigma_1 - \sigma_2):(D_1 - D_2) 
\geq
q \abs{D_1} + q \abs{D_2} - \abs{\sigma_1}\abs{D_2} - \abs{\sigma_2}\abs{D_1} \geq 0,
$$
which concludes the proof.
\end{proof}

\noindent

\begin{defi}\label{definition-weak}
Let $f \in \L{2}(\mathbb{R}_+;V')$ and $(v_0, {p_f}_0) \in \H{} \times \L{\infty}$ be the initial conditions.
For $T > 0$, we say that $(v, p_f, \sigma)$ is a \emph{weak solution} of~\eqref{modelsimpli} on $\Omega \times (0,T)$ if
\begin{equation*}
\begin{aligned}
& v\in \L{\infty}\mathrm{H} \cap \L{2}\mathrm{V}, && \qquad \partial_t v \in \L{2}\V', \\
& p_f \in \L{\infty}\L{\infty} \cap \L{2}\H{1}_0, && \qquad \partial_t p_f \in \L{2}\H{-1}, \\
& \sigma \in \L{2}\L{2}_{sym},
\end{aligned}
\end{equation*}
and satisfy, for any $(\psi, r) \in \mathrm{V} \times \H{1}_0(\Omega)$:
\begin{equation}\label{formulation faible1}
\left\{
\begin{aligned}
& \rho \bigg( \left\langle \partial_t v, \psi \right\rangle_{\V',\V} + \int_\Omega (v \cdot \nabla v) \cdot \psi \bigg) + \eta \int_\Omega \nabla v : \nabla \psi + \int_\Omega \sigma : \nabla \psi = \left\langle f, \psi \right\rangle_{\V',\V}, \\
& \left\langle \partial_t p_f , r \right\rangle_{\H{-1},\H{1}_0} + \int_\Omega \left( v \cdot \nabla p_f \right) \, r + K \int_\Omega \nabla p_f \cdot \nabla r = 0, \\
& (\sigma,Dv) \in \mathcal{G}[q] \quad \text{with} ~~ q = q_0 (p_s - p_f)^+, \\[0.2cm]
& (v, p_f) \vert _{t=0} = (v_0, {p_f}_0) \text{ a.e. in } \Omega,
 \end{aligned}
 \right.
\end{equation}
where~$p_s = {p_s}_0 \, + \, \rho \, |g| \, (y_0-y)$ is a given function of the density and the vertical coordinate~$y$.
\end{defi}

\noindent
The main result of the paper is the following theorem.
%
%
\begin{thm}\label{theorem}
If $f \in \L{2}(\mathbb{R}_+;V')$, $v_0 \in \H{}$ and
~${p_f}_0 \in \L{\infty}(\Omega)$ is non-negative,
then for all $T > 0$, there exists a weak solution $(v,p_f,\sigma)$ of~\eqref{modelsimpli} on $\Omega \times (0,T)$ in the sense of the definition~\ref{definition-weak}, with~$p_f \geq 0$ a.e. in $(0,T)\times\Omega$.
\end{thm}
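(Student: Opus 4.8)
The plan is to follow the classical Galerkin-plus-compactness scheme, with the coupling equation and the implicit constitutive relation as the two places requiring care. First I would build finite-dimensional approximations: fix Galerkin bases $(\psi_i)_i$ of $\V$ (say the eigenfunctions of the Stokes operator) and $(r_i)_i$ of $\H{1}_0(\Omega)$, set $v^k \in \V_k$, $p_f^k \in (\H{1}_0)_k$, and solve the coupled ODE system obtained by projecting the momentum and advection-diffusion equations onto these spaces. Here one must choose an explicit approximate stress: since the graph is implicit, I would regularise it by replacing $Dv/|Dv|$ by the single-valued, Lipschitz-on-bounded-sets approximation $Dv/\sqrt{|Dv|^2+\varepsilon^2}$ (or use the subdifferential/penalisation of $|\cdot|$), obtaining $\sigma^k = q^k \, Dv^k/\sqrt{|Dv^k|^2+\varepsilon^2}$ with $q^k = q_0(p_s - p_f^k)^+$; then Carath\'eodory's theorem gives local existence of the ODE system, and the a priori estimates below make it global on $[0,T]$. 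A small but genuine point: one needs $p_f^k \geq 0$, which I would get by testing the (approximate) advection-diffusion equation with $(p_f^k)^-$ — using $\div v^k = 0$ to kill the transport term — to conclude $\|(p_f^k)^-\|_{\L{2}} = 0$, hence $0 \leq q^k \leq q_0 \|p_s\|_{\L{\infty}}$ uniformly.

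Next I would derive the uniform estimates. Testing the momentum equation with $v^k$ and using $\int (v^k\cdot\nabla v^k)\cdot v^k = 0$, $\int \sigma^k : \nabla v^k = \int \sigma^k : Dv^k = \int q^k |Dv^k|^2/\sqrt{|Dv^k|^2+\varepsilon^2} \geq 0$, and the bound $f \in \L{2}V'$, gives $v^k$ bounded in $\L{\infty}\H{}\cap \L{2}\V$. Testing the $p_f$ equation with $p_f^k$ (transport term vanishes) gives $p_f^k$ bounded in $\L{\infty}\L{2}\cap\L{2}\H{1}_0$, and a maximum-principle argument on the approximation (or lower/upper truncations) gives the $\L{\infty}\L{\infty}$ bound with constant $\|{p_f}_0\|_{\L{\infty}}$. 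Boundedness of $\sigma^k$ in $\L{2}\L{2}_{sym}$ is immediate since $|\sigma^k| \leq q^k \leq q_0\|p_s\|_{\L{\infty}}$. Estimates on the time derivatives $\partial_t v^k \in \L{2}\V'$ (in 2D the nonlinear term $v\cdot\nabla v$ is controlled in $\L{2}\V'$ via Ladyzhenskaya's inequality) and $\partial_t p_f^k \in \L{2}\H{-1}$ follow from the equations; these are exactly where the restriction to $d=2$ (Remark~\ref{3D}) is used.

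Then comes the passage to the limit. By Banach–Alaoglu and Aubin–Lions I extract a subsequence with $v^k \to v$ weakly-$*$ in $\L{\infty}\H{}$, weakly in $\L{2}\V$, and strongly in $\L{2}\L{2}$; $p_f^k \to p_f$ weakly-$*$ in $\L{\infty}\L{\infty}$, weakly in $\L{2}\H{1}_0$, strongly in $\L{2}\L{2}$; $\sigma^k \to \sigma$ weakly in $\L{2}\L{2}_{sym}$. Strong $\L{2}\L{2}$ convergence of $p_f^k$ (plus the uniform $\L{\infty}$ bound) yields $q^k = q_0(p_s - p_f^k)^+ \to q = q_0(p_s - p_f)^+$ strongly in every $\L{m}\L{m}$, $m<\infty$, since $x\mapsto (p_s-x)^+$ is Lipschitz. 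The linear terms and the nonlinear convection terms pass to the limit in the standard way (strong $v^k\to v$ in $\L{2}\L{2}$ against the weak gradient convergence). So $(v,p_f,\sigma)$ satisfies the first two lines of~\eqref{formulation faible1}, with the correct initial data (continuity in time of $v$ and $p_f$ into weak topologies).

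The main obstacle is the last line: showing $(\sigma, Dv) \in \mathcal{G}[q]$, i.e. recovering the implicit constitutive law in the limit despite only weak convergence of $\sigma^k$ and $Dv^k$. I would handle this by a Minty-type monotonicity argument using Lemma~\ref{graphe positif}. The approximate identity $\int \sigma^k : Dv^k = \rho\langle f,v^k\rangle - \rho\langle\partial_t v^k,v^k\rangle - \eta\|\nabla v^k\|^2 + (\text{small error from the }\varepsilon\text{-regularisation})$ lets me compute $\limsup_k \int_0^T\int_\Omega \sigma^k : Dv^k$ and compare it with $\int_0^T\int_\Omega \sigma : Dv$ obtained by passing to the limit in the limit equation tested with $v$; combined with the energy inequality $\liminf \|\nabla v^k\|^2 \geq \|\nabla v\|^2$ this gives $\limsup_k \int \sigma^k:Dv^k \leq \int \sigma:Dv$. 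Then for an arbitrary pair $(\tau, D\phi) \in \mathcal{G}[q^k]$ — concretely take $\tau = q^k \, D\phi/\sqrt{|D\phi|^2+\varepsilon^2}$ for test fields $\phi$, or pairs built from $q = q_0(p_s-p_f)^+$ — monotonicity of $\mathcal{G}[q^k]$ gives $\int (\sigma^k - \tau):(Dv^k - D\phi) \geq 0$; letting $k\to\infty$ (using the $\limsup$ bound to control the troublesome term $\int\sigma^k:Dv^k$, and strong convergence $q^k\to q$ to pass to the limit in $\tau$) yields $\int (\sigma - \tilde\tau):(Dv - D\phi) \geq 0$ for all admissible $(\tilde\tau, D\phi)$ with yield $q$. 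A density/localisation argument (varying $\phi$ and exploiting that the graph is maximal monotone pointwise) then forces $(\sigma, Dv) \in \mathcal{G}[q]$ almost everywhere, completing the proof. The delicate bookkeeping is controlling the $\varepsilon$-regularisation error uniformly and justifying that the class of test pairs is rich enough for the Minty argument to localise; this is precisely the step flagged as Section~\ref{limit-graph}-Step~2~(b).3.
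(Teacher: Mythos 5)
Your proposal follows essentially the same route as the paper: Galerkin projection on Stokes and Laplacian eigenbases, a single-valued regularization of the Bingham graph making $\sigma^k$ an explicit function of $Dv^k$ and $q^k$, energy estimates plus a time-derivative bound giving Aubin--Lions compactness, and a Minty-type monotonicity argument (using Lemma~\ref{graphe positif} together with $\limsup_k\int\sigma^k:Dv^k\le\int\sigma:Dv$) to recover the implicit constitutive law. The only differences are cosmetic: the paper ties the smoothing parameter to the Galerkin index via $\sigma_k=q_k\,Dv_k/(\abs{Dv_k}+1/k)$ so a single limit suffices, whereas you keep an independent $\varepsilon$; and the paper handles positivity of $p_{f,k}$ by truncating the projected initial datum and invoking the maximum principle, while you test with $(p_{f}^k)^-$ (note in passing that $(p_f^k)^-$ does not lie in the finite-dimensional Galerkin space $\H{1}_{0,k}$, so this step needs the same hand-waving the paper also papers over). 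The final ``maximal monotonicity'' phrasing is shorthand for what the paper does explicitly by building approximating fields $D_i\in\mathcal D((0,T)\times\Omega)$, showing $\int(\sigma_k-\sigma_k^*(q_k,D_i)):(Dv_k-D_i)\to 0$ in the joint limit, and separating $\abs{\sigma}\le q$ from $\sigma:Dv=q\abs{Dv}$.
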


\section{Proof}\label{section2}

The proof of the existence Theorem~\ref{theorem} consists of two main steps:
\begin{enumerate}
\item We first introduce an approximate solution using a sequence of well-posed problems. 
The first difficulty is common: 
we look for a solution $(\sigma,p_f,v)$ with values in infinite dimensional spaces (of kind~$\L{2}$) and we use a Galerkin scheme to approach it by functions with values in finite dimensional spaces (for instance~$\L{2}_k$), and then use the classical Cauchy theorem for ODE.
The second difficulty comes from the Bingham condition $(\sigma,Dv)\in \mathcal{G}[q]$. Even in the finite dimensional case, this kind of condition is not usual and we introduce a regularization: we approach the graph~$\mathcal{G}[q]$ by the graph of a regular function $Dv\longmapsto \sigma$;
\item Next, we take the limit in the approximate solution to find a solution to the initial problem. 
We then use the energy estimates of the model to deduce bounds. 
The main difficulty is to obtain the limit in the non-linear condition $(\sigma,Dv)\in \mathcal{G}[q]$. 
This point is treated in the subsection~\ref{limit-graph}.
\end{enumerate} 

\subsection{Approximate model}\label{section2-1}

Let $k > 0$ be an integer.
\paragraph{Galerkin approximation}
In a classical way, we reduce the problem to finite dimensional spaces following Galerkin approximations. 
Let $(w_i)_{i\geq 1}$ be the basis of~$\V$ made of eigenfunctions of Stokes' operator. 
Let $(r_i)_{i\geq 1}$ be the basis of~$\H{1}_0(\Omega)$ made of eigenfunctions of the operator~$- \Delta$. 
Let~$\mathcal{P}_k$ be the orthogonal projection from~$\V$ to~$ \V _k$, finite dimensional space spanned by the~$k$ first eigenfunctions of Stokes' operator~$\{ w_1, \dots, w_{k} \}$. 
In the same way,~$\mathcal{Q}_k$ is the orthogonal projection from~$\H{1}_{0}$ to~$\H{1}_{0,k} := \mathrm{span}(r_1,\dots, r_{k})$.
We define the projected velocity and the projected fluid pressure by
$$
v_k = \mathcal{P}_k (v)
\quad \text{and} \quad
p_{f,k} = \mathcal{Q}_k(p_f).
$$
\paragraph{Graph regularization}
For any given $q_k$, we define a regularized graph $\mathcal G _k$ by
\begin{equation}
\mathcal{G}_k = \Big\{ \Big( q_k \frac{Dv}{\abs{Dv}+1/k},Dv \Big) \, , \; v \in\L{2}\V_k \Big\}.
\label{eq:regular graph}
\end{equation}
\paragraph{Regularization of the initial conditions}
The initial velocity~$v_0$ is approached using the projector above:
$$
v_0^k = \mathcal{P}_k (v_0).
$$
Since the initial fluid pressure~${p_f}_0$ is non-negative (and since the fluid pressure satisfies the maximum principle), in order to conserve this property at each step of the approximation, we define
$$
{p_f}_0^k = \T{\big( \mathcal{Q}_k({p_f}_0) \big) },
$$
where~$\T{}$ is the truncation between~$0$ and~$ \operatorname*{ess\,sup}_{\Omega} {p_f}_0$. 
\paragraph{Approximated problem}
Now, the approximated problem is the following:
\par
\noindent
Find $(v_k,p_{f,k},\sigma_k) \in ( \L{\infty}\mathrm{H} \cap \L{2}\mathrm{V}_k )  \times ( \L{\infty}\L{\infty} \cap \L{2}\H{1}_{0,k} ) \times \L{2}\L{2}_{sym} $ such that, for all $(\psi,r)\in \mathrm{V}_k \times \H{1}_{0,k}$ we have
\begin{equation}\label{model k}
\left\{
\begin{aligned}
& \rho \bigg( \left\langle \partial_t v_k, \psi \right\rangle_{\V',\V} + \int_\Omega ( v_k \cdot \nabla v_k ) \cdot \psi \bigg) + \eta \int_\Omega \nabla v_k : \nabla \psi + \int_\Omega \sigma_k : \nabla \psi = \left\langle f, \psi \right\rangle_{\V',\V}, \\
& \left\langle \partial_t p_{f,k} , r \right\rangle_{\H{-1},\H{1}_0} + \int_\Omega (v_k \cdot \nabla p_{f,k}) \, r + K \int_\Omega \nabla p_{f,k} \cdot \nabla r = 0, \\
& (\sigma_k,Dv_k) \in \mathcal{G}_k \text{ with } q_k = q_0(p_s - p_{f,k} )^+ ,\\[0.2cm]
& (v_k, \, p_{f,k}) \vert _{t=0} = (v_0^k, \, {p_f}_0^k).
\end{aligned}
\right.
\end{equation}
Thanks to the definition of the graph~$\mathcal{G}_k$, see the equation~\eqref{eq:regular graph}, the stress tensor~$\sigma_k$ is explicitly given as a function of~$q_k$ and~$v_k$, so we have decoupled the problem.
We can then solve problem~\eqref{model k} using the Schauder theorem and classical estimates (see~\cite{B-F}). 

\subsection{A priori estimates}\label{section2-2}

The aim of this subsection is to obtain suitable estimates on the solution $(v_k,p_{f,k},\sigma_k)$ to the approximated problem~$(\ref{model k})$, which do not depend on the parameter~$k$. 

\begin{prop}\label{prop:estim k 1}
The solution $(v_k,p_{f,k},\sigma_k)$ to the problem~$(\ref{model k})$ satisfies the following estimates (where $c$ is a constant which does not depend on~$k$):
\begin{enumerate}
\item Uniform bounds on the fluid pressure, plasticity yield and stress tensor
\begin{equation}\label{estimate1}
0 \leq p_{f,k} \leq \operatorname*{ess\,sup} {p_f}_0, \quad \norm{q_{k}}_{\L \infty \L\infty} \leq c, \quad \norm{\sigma_{k}}_{\L \infty \L\infty} \leq c.
\end{equation}
\item Energy estimate on the velocity field
\begin{equation}\label{estimate2}
\norm{v_k}_{\L{\infty}\L{2}}^2 + \norm{\nabla v_k}_{\L{2}\L{2}}^2 + \norm{\sigma_k:Dv_k}_{\L{1}\L{1}} \leq c.
\end{equation}
\item Estimate on the momentum
\begin{equation}\label{estimate4}
\norm{\partial_t v_k}_{\L{2}\V'} \leq c.
\end{equation}
\item Estimate on the fluid pressure and its time derivative
\begin{equation}\label{estimate3}
\norm{p_{f,k}}_{\L{2}\H{1}} \leq c \quad \text{and} \quad \norm{\partial_t p_{f,k}}_{\L{2}\H{-1}} \leq c.
\end{equation}
\end{enumerate}
\end{prop}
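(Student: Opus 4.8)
The plan is to derive the four groups of estimates in turn, each by testing one of the two evolution equations of \eqref{model k} against a suitable element of the relevant finite-dimensional space and exploiting the sign of the nonlinear terms; all constants come out independent of $k$ because the regularized stress has a favourable sign and the approximate initial data satisfy $\norm{v_0^k}_{\L{2}}\le\norm{v_0}_{\L{2}}$ and $0\le {p_f}_0^k\le\operatorname*{ess\,sup}_\Omega{p_f}_0$. For \eqref{estimate1}: since $v_k$ is divergence free and vanishes on $\partial\Omega$, the convective term in the fluid-pressure equation is skew-symmetric, so testing that equation against the truncations $\min(p_{f,k},0)$ and $(p_{f,k}-M)^+$, with $M=\operatorname*{ess\,sup}_\Omega{p_f}_0$, makes the convective term vanish while the diffusive term keeps the right sign; hence $\frac{\d}{\d t}\int_\Omega\Phi(p_{f,k})\le 0$ for the associated convex primitive $\Phi$, and as the bound $0\le p_{f,k}\le M$ holds at $t=0$ by construction of ${p_f}_0^k$ it holds for all $t$. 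Because $p_s$ is a fixed, bounded affine function of $y$ on the bounded set $\Omega$, this yields $0\le q_k=q_0(p_s-p_{f,k})^+\le q_0\norm{p_s}_{\L{\infty}(\Omega)}=:c$, and then $\abs{\sigma_k}=q_k\,\abs{Dv_k}/(\abs{Dv_k}+1/k)\le q_k\le c$ pointwise.

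For \eqref{estimate2}, take $\psi=v_k(t)$ in the momentum equation: the time term gives $\frac{\rho}{2}\frac{\d}{\d t}\norm{v_k}_{\L{2}}^2$, the convective term vanishes, the Newtonian term gives $\eta\norm{\nabla v_k}_{\L{2}}^2$, and, $\sigma_k$ being symmetric, $\int_\Omega\sigma_k:\nabla v_k=\int_\Omega\sigma_k:Dv_k=\int_\Omega q_k\,\abs{Dv_k}^2/(\abs{Dv_k}+1/k)\ge 0$, while the forcing is absorbed by Young's inequality $\langle f,v_k\rangle_{\V',\V}\le\frac{\eta}{2}\norm{\nabla v_k}_{\L{2}}^2+\frac{1}{2\eta}\norm{f}_{\V'}^2$. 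Integrating over $(0,T)$ and using the bounds on the data together with $f\in\L{2}\V'$ gives the bounds on $\norm{v_k}_{\L{\infty}\L{2}}$ and $\norm{\nabla v_k}_{\L{2}\L{2}}$, and the $\L{1}\L{1}$ bound on $\sigma_k:Dv_k$ follows since this quantity is nonnegative.

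For \eqref{estimate3}, taking $r=p_{f,k}(t)$ in the fluid-pressure equation (again the convective term drops) gives $\frac12\frac{\d}{\d t}\norm{p_{f,k}}_{\L{2}}^2+K\norm{\nabla p_{f,k}}_{\L{2}}^2=0$, whence $\norm{p_{f,k}}_{\L{2}\H{1}}\le c$ after integration (and Poincaré). For the time derivative one argues by duality: since the $r_i$ are eigenfunctions of $-\Delta$, the projection $\mathcal{Q}_k$ is simultaneously the $\L{2}$- and the $\H{1}_0$-orthogonal projection onto $\H{1}_{0,k}$, so $\langle\partial_t p_{f,k},r\rangle=\langle\partial_t p_{f,k},\mathcal{Q}_k r\rangle$ for $r\in\H{1}_0(\Omega)$ with $\norm{\mathcal{Q}_k r}_{\H{1}_0}\le\norm{r}_{\H{1}_0}$; rewriting the convective term as $-\int_\Omega (v_k\cdot\nabla\mathcal{Q}_k r)\,p_{f,k}$ and bounding it by $\norm{v_k}_{\L{2}}\norm{\nabla\mathcal{Q}_k r}_{\L{2}}\norm{p_{f,k}}_{\L{\infty}}$ (using \eqref{estimate1}) gives $\norm{\partial_t p_{f,k}}_{\H{-1}}\le c(\norm{v_k}_{\L{2}}+\norm{\nabla p_{f,k}}_{\L{2}})$, hence $\norm{\partial_t p_{f,k}}_{\L{2}\H{-1}}\le c$ by \eqref{estimate2} and the bound just obtained.

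Finally, for \eqref{estimate4}, the analogous orthogonality holds for the Stokes eigenfunctions $w_i$, so $\norm{\partial_t v_k}_{\V'}$ is estimated by pairing the momentum equation against $\mathcal{P}_k\psi$ for $\psi\in\V$, $\norm{\psi}_{\V}\le 1$: the Newtonian term is controlled by $\norm{\nabla v_k}_{\L{2}}$, the stress term by $\norm{\sigma_k}_{\L{\infty}\L{\infty}}\abs{\Omega}^{1/2}\le c$ via \eqref{estimate1}, the forcing by $\norm{f}_{\V'}$, and the convective term $\int_\Omega (v_k\cdot\nabla\mathcal{P}_k\psi)\cdot v_k$ by $\norm{v_k}_{\L{4}(\Omega)}^2\norm{\nabla\mathcal{P}_k\psi}_{\L{2}}\le c\,\norm{v_k}_{\L{2}}\norm{\nabla v_k}_{\L{2}}$, using the two-dimensional Ladyzhenskaya inequality $\norm{u}_{\L{4}(\Omega)}^2\le c\,\norm{u}_{\L{2}}\norm{\nabla u}_{\L{2}}$. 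Squaring, integrating over $(0,T)$ and absorbing one factor by $\norm{v_k}_{\L{\infty}\L{2}}\le c$ in $\int_0^T\norm{v_k}_{\L{2}}^2\norm{\nabla v_k}_{\L{2}}^2\,\d t$ yields $\norm{\partial_t v_k}_{\L{2}\V'}\le c$. This last bound on the convective term is the point that genuinely uses the two-dimensional setting: in three dimensions Ladyzhenskaya gives only $\norm{u}_{\L{4}(\Omega)}^2\le c\,\norm{u}_{\L{2}}^{1/2}\norm{\nabla u}_{\L{2}}^{3/2}$, which would place $\partial_t v_k$ only in $\L{4/3}\V'$ and not in $\L{2}\V'$. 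That estimate, together with making the maximum principle of the first step fully rigorous within the Galerkin scheme, is the only delicate part; everything else is routine.
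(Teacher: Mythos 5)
Your proposal follows the paper's proof essentially step for step: you test each equation against the approximate solution itself to obtain the energy estimates, exploit the sign of the regularized stress $\sigma_k : Dv_k \geq 0$, use the two-dimensional interpolation $\norm{u}_{\L 4}^2\le c\,\norm{u}_{\L 2}\norm{u}_{\H 1}$ for the convective term (which the paper attributes loosely to Poincar\'e and you name more precisely as Ladyzhenskaya), and estimate the time derivatives by duality using that $\mathcal{P}_k$ and $\mathcal{Q}_k$ are orthogonal projections in $\V$ and $\H{1}_0$ respectively. On all these points your write-up is correct and no different in substance from the paper's.

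The one point worth flagging concerns the first estimate. You propose testing the pressure equation against the truncations $\min(p_{f,k},0)$ and $(p_{f,k}-M)^+$ with $M=\operatorname*{ess\,sup}_\Omega {p_f}_0$; that is indeed the standard route to a maximum principle for a continuous advection--diffusion equation. But these truncations are not elements of $\H{1}_{0,k}=\mathrm{span}(r_1,\dots,r_k)$, so they are not admissible test functions in the Galerkin system \eqref{model k}, and the spectral projection $\mathcal{Q}_k$ does not preserve pointwise sign; the argument as written therefore does not close at the discrete level. You yourself single this out as the delicate step and you are right to, but it is more than a technicality: the $\L\infty$ bound on $p_{f,k}$ is exactly what feeds into the $\L\infty$ bounds on $q_k$ and $\sigma_k$, on which the rest of the estimates, and the final monotonicity argument, depend. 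The paper is equally terse here, invoking only ``the maximum principle'' without reconciling it with the Galerkin truncation. A rigorous repair would be either to discretise the pressure equation in a way compatible with comparison (for instance, solve it exactly with the smooth velocity field $v_k$ as transport coefficient rather than projecting onto $\H{1}_{0,k}$), or to insert a truncation directly into the definition of $q_k$ so that $q_k$ and $\sigma_k$ are uniformly bounded by construction, and recover the $\L\infty$ bound on $p_f$ only in the limit, where genuine test functions are available. Apart from this shared gap, your proof matches the paper's.
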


\begin{rem}
In the sequel, we will want to use approximated solutions~$v_k$ and~${p_f}_k$ as test-functions in~\eqref{model k}. 
It is theoretically not possible because of the time dependency of these functions. 
In fact, it is possible to avoid the problem, remarking that these functions are linear combinations of space-dependent eigenfunctions, 
with time-dependent coefficients. For example, function~$v_k$ reads as
$$ v_k(x,t) = \sum_{i=1}^{k} c_i(t) w_i(x) .$$
Therefore, it suffices to choose the eigenfunctions as test-functions, multiply each equation by the appropriate coefficient and sum up the results to get the same equation we would have if we formally used $v_k$ as a test-function, see~\cite{B-F}, p.349.
\end{rem}

\begin{proof}
Consider a solution $(v_k,p_{f,k},\sigma_k)$ to the approximated problem~\eqref{model k}.
\begin{enumerate}
\item The first estimate of~\eqref{estimate1} comes from to the maximum principle. We also use the definition of the truncation~$\mathcal T$ and the bounds on the initial condition ${p_f}_0$ in $\L{\infty}$. 
The second estimate of~\eqref{estimate1} is a consequence of the first estimate since we recall that~$q_k  = q_0(p_s- p_{f,k})^+$.
The last estimate of~\eqref{estimate1} comes from the definition of $\sigma_k$.
\item In order to obtain a so-called energy estimate for the velocity we use the usual method: we chose $\psi=2v_k$ as test-function in the weak formulation~\eqref{model k}. 
After classical calculation we obtain
\begin{equation}\label{eq1547}
\rho \int_\Omega v_k^2(t) + \eta \int_0^t \int_\Omega | \nabla v_k |^2 + 2 \int_0^t \int_\Omega \sigma_k:Dv_k 
\leq \rho \int_\Omega v_k^2(0) + 2 c \norm{f}_{ \L{2}\V'}^2.
\end{equation}
Using the explicit expression of the stress tensor~$\sigma_k$ given by the graph definition~\eqref{eq:regular graph} and the fact that $q_k$ is non-negative, we have
\begin{equation}\label{positive rheo}
\sigma_k:Dv_k = q_k \frac{\abs{Dv_k}^2}{\abs{Dv_k}+1/k} \geq 0 .
\end{equation}
We obtain the estimate~\eqref{estimate2}, noting that the right-hand side member of~\eqref{eq1547} is bounded as follows
$$
 \rho \int_\Omega v_k^{2}(0) + c \norm{f}_{ \L{2}\V'}^2
  \leq
 \rho \| \mathcal P_k (v_0) \|_{\L{2}}^2 + c \norm{f}_{ \L{2}\V'}^2
  \leq
 \rho \| v_0 \|_{\L{2}}^2 + c \norm{f}_{ \L{2}\V'}^2
  \leq
 c.
$$
   \item In order to evaluate the term~$\partial_t v_k$ in $\V'$, we begin with any $\psi \in \V_k$ and rewrite~\eqref{model k}$_1$ as
	$$ \rho \left\langle \partial_t v_k, \psi \right\rangle_{\V',\V} = - \rho \int_\Omega (v_k \cdot \nabla v_k) \cdot \psi - \eta \int_\Omega \nabla v_k : \nabla \psi - \int_\Omega \sigma_k : \nabla \psi + \left\langle f, \psi \right\rangle_{\V',\V}. $$
Then, using the divergence-free constraint in the first term of the right hand side, the following inequality holds
	$$ \rho \left| \left\langle \partial_t v_k, \psi \right\rangle_{\V',\V} \right| \leq  \rho \int_\Omega \left| v_k \right| \, \left| \nabla \psi \right| \, |v_k| + \eta \int_\Omega \left| \nabla v_k \right| \, \left| \nabla \psi \right| + \int_\Omega |\sigma_k| \, \left|\nabla \psi\right| + \left| \left\langle f, \psi \right\rangle_{\V',\V}\right|. $$
Now applying H\"older and Poincar\'e inequalities, for all~$\psi$ in~$\V_k$, $\psi \neq 0$, we get
	$$ \rho \, \frac{ \left| \left\langle \partial_t v_k, \psi \right\rangle_{\V',\V} \right|}{\norm{\psi}_{\V}} 
	\leq  
  \rho \, \norm{v_k}_{\L 4} \, \norm{v_k}_{\L 4} + \eta \, \norm{v_k}_{\H 1} + \norm{\sigma_k}_{\L 2} + \norm{f}_{\V'}. $$
Since we work in $\Omega \subset \mathbb R^2$, the velocity~$v_k$ belongs to~$\in\H 1$. As a consequence of Poincar\'e inequality (see for example~\cite{B-F}) we have~$ \norm{v_k}_{\L 4}^2 \leq c \, \norm{v_k}_{\L 2} \norm{v_k}_{\H 1}$. 
By integrating on~$(0,T)$, we get
$$
\begin{aligned}
\rho \norm{ \partial_t v_k}_{\L{2}\mathrm{V}'} & \leq c \, \left( \int_0^T \big(\norm{v_k}_{\L{2}} \norm{ v_k}_{\H 1} + \norm{v_k}_{\H 1} + \norm{\sigma_k}_{\L{2}} + \norm{f}_{\V'} \big)^{2} dt \right) ^{ \frac{1}{2}} \\
                                               & \leq c \, \big( \norm{v_k}_{\L\infty\L{2}} \norm{ v_k}_{\L 2 \H 1} + \norm{v_k}_{\L 2 \H 1} + \norm{\sigma_k}_{\L 2 \L{2}} + \norm{f}_{\L 2 \V'}\big).
\end{aligned}
$$
We conclude thanks to~\eqref{estimate1} and~\eqref{estimate2}.
   \item The first estimate in~\eqref{estimate3} is obtained using $r=2p_{f,k}$ as test-function in the weak formulation~\eqref{model k}:
$$
\int_\Omega  p_{f,k} ^2 (t) + 2 k \int_0^t \int_\Omega | \nabla p_{f,k} |^2 = \int_\Omega  p_{f,k} ^2 (0) \leq c.
$$
In order to obtain the second one, we proceed similarly to obtain estimates on the velocity time derivative: we begin with any $r \in \H 1_{0,k}$, $r\neq 0$ in~\eqref{model k}$_2$ and we conclude thanks to the previous results~\eqref{estimate1},~\eqref{estimate2}~and~$(\ref{estimate3})_1$.
\end{enumerate}
\end{proof}

\subsection{Existence of limits}

\paragraph{Limit for the velocity}
From~\eqref{estimate2} and~\eqref{estimate4}, and using Aubin–Lions–Simon lemma~(\cf \cite{B-F}), we deduce that a velocity~$v$ exists such that, up to a sub-sequence, we have the following weak, weak~$\star$ and strong convergences:
\begin{enumerate}
	\item[\namedlabel{cv1}{(C1)}] $v_k \rightharpoonup v$ in $\L{2}\V$,
	\item[\namedlabel{cv1'}{(C1')}] $v_k \rightharpoonup^\star v$ in $\L{\infty}\H{}$,
	\item[\namedlabel{cv2}{(C2)}] $v_k \rightarrow v$ in $\L{2}\H{}$.
\end{enumerate}

\paragraph{Limit for the fluid pressure}
Thanks to estimate~\eqref{estimate3} of Proposition~$\ref{prop:estim k 1}$ and Aubin–Lions–Simon lemma again, we can also extract a sub-sequence from $(p_{f,k})_{k\geq 1}$, which converges to a certain function~$p_f$ as follows
\begin{itemize}
	\item[\namedlabel{cv3}{(C3)}] $p_{f,k} \rightharpoonup p_f$ in $\L{2}\H{1}$,
	\item[\namedlabel{cv3'}{(C3')}] $p_{f,k} \rightharpoonup^\star p_f$ in $\L{\infty}\L{\infty}$,
	\item[\namedlabel{cv4}{(C4)}] $p_{f,k} \rightarrow p_f$ in $\L{2}\L{2}$.
\end{itemize}

\paragraph{Limit related to the Bingham tensor}
As the domain~$\Omega$ is bounded, using estimates~\eqref{estimate1}, there exist~$q$ and~$\sigma$ such that, up to a sub-sequence again:
\begin{itemize}
	\item[\namedlabel{cv5}{(C5)}] $q_k \rightarrow q$ in $\L{2}\L{2}$,
	\item[\namedlabel{cv6}{(C6)}] $\sigma_k \rightharpoonup \sigma$ in $\L{2}\L{2}_{sym}$.
\end{itemize}

\noindent Note here that convergences~\ref{cv1'} and~\ref{cv3'} prove that the limits~$v$ and~$p_f$ respectively belongs to~$\L \infty \H{}$ and~$\L{\infty}\L{\infty}$. We do not use it again in the sequel.
\subsection{Passage to the limit when~$k$ tends to infinity}\label{section2-3}

We now prove that the limit $(v,p_f,\sigma)$ is a solution of the problem~\eqref{formulation faible1}.

\subsubsection{Limit in equations and initial conditions}

\paragraph{Limit in equations}
In order to deal with non-linear terms at the limit when~$k$ tends to infinity, it suffices to obtain the compactness on approximated solution by establishing an estimate on the time derivative of the velocity. 
As for homogeneous Navier-Stokes equations, see for instance~\cite{B-F} for details, we can estimate $\tau_h v_k - v_k $, where~$\tau$ is a shift operator in time in a suitable space.\\
Let $j > 0$ be an integer and~$\psi \in \V_j$ be a test-function. 
Then for all~$k \geq j$,
$$ 
\rho \bigg( \left\langle \partial_t v_k, \psi_j \right\rangle_{\V',\V} + \int_\Omega (v_k \cdot \nabla v_k) \cdot \psi_j \bigg) + \eta \int_\Omega \nabla v_k : \nabla \psi_j + \int_\Omega \sigma_k : \nabla \psi_j = \left\langle f , \psi_j \right\rangle_{\V',\V}  .
$$
The integer~$j$ being given, the aim is to let~$k$ tend to infinity. 
From \ref{cv1} and \ref{cv6}, we already know that
\begin{itemize}
\item[$\bullet$] $\nabla v_k\rightharpoonup \nabla v \in \L{2}\L{2}$,
\item[$\bullet$] $\sigma_k \rightharpoonup \sigma \in \L{2}\L{2}_{sym}$.
\end{itemize}
In order to treat the terms containing~$\partial_t v_k$ and~$ v_k \cdot \nabla v_k$,we first estimate~$( \partial_t v_k)_{k\geq 1}$. This quantity is bounded in~$\L{2}\mathrm{V}'$, then it has a weak limit, which must be~$ \partial_t v$ by continuity of the time derivative. Consequently, we have
\begin{itemize}
\item[$\bullet$] $\partial_t v_k \rightharpoonup \partial_t v \in \L{2}\mathrm{V'}$.
\end{itemize}
Furthermore, we can deduce (see~\cite{B-F}, p.357-358 for details) from convergences on~$v_k$ that
\begin{itemize}
\item[$\bullet$] $ v_k \cdot \nabla v_k \rightharpoonup v \cdot \nabla v \in \L{ \frac{4}{3}}\L{ \frac{6}{5}}$.
\end{itemize}
We now take the limit when~$k$ tends to infinity and we get the following equality on~$\sigma$ and~$v$ : 
$$ 
\rho \bigg( \left\langle \partial_t v , \psi_j \right\rangle_{\V',\V} + \int_\Omega (v \cdot \nabla v) \cdot \psi_j \bigg) + \eta \int_\Omega \nabla v : \nabla \psi_j + \int_\Omega \sigma : \nabla \psi_j = \left\langle f , \psi_j \right\rangle_{\V',\V} .
$$
As~$(w_i)_{i\geq1}$ is total, if~$ \psi \in \mathrm{V} $, then with~$ \psi_{j} = \mathcal{P}_{j} (\psi)$,~$ (\psi_j)_j$ converges to~$ \psi$ in~$ \mathrm{V}$. 
So we can take the limit in each term when~$j$ tends to infinity, and prove that~$(v,\sigma)$ satisfies the equation~$(\ref{formulation faible1})_1$ for all~$ \psi \in \mathrm{V}$.
In a more classical way, we can prove that the fluid pressure~$p_f$ satisfies~$(\ref{formulation faible1})_2$ for all~$r \in \H{1}_0(\Omega)$.

\paragraph{Initial conditions}
We apply twice the theorem II$.5.13$ p.101 in~\cite{B-F} to prove that the limits~$v$ and~$p_f$ are continuous on~$[0,T]$. 
More precisely:\\
\noindent $\checkmark$ for the velocity, thanks to~\eqref{estimate2} and~\eqref{estimate4} we deduce that~$v\in \mathcal C ^0 ([0,T],\H{})$, and 
$$
v\vert_{t=0} = v_0 \text{ a.e.} 
$$
\noindent $\checkmark$ For the fluid pressure, from~\eqref{estimate3} we deduce that~$p_f\in \mathcal C ^0 ([0,T],\L 2(\Omega))$, and
$$
p_f\vert_{t=0} = {p_f}_0 \text{ a.e.} 
$$

\subsubsection{Passage to the limit in the graph $\mathcal{G}_k$}\label{limit-graph}

The last step of the proof is to show that $(\sigma,Dv) \in \mathcal{G}[q]$. More precisely, we already know that~$D v$ belongs to~$\L \infty \H{-1}\cap\L 2 \L 2$ and that~$\sigma$ belongs to $\L 2 \L 2 _{sym}$. We will prove that (see the definition~\eqref{definition-Gq} of the graph~$\mathcal{G}[q]$):
\begin{equation}\label{bingham12}
\abs{\sigma} \leq q
\quad \text{and} \quad
\sigma : Dv = q \abs{Dv}.
\end{equation}
\paragraph{Step~1 --}
To prove the first relation of~\eqref{bingham12} we note that by definition of~$\mathcal G _k$, see \eqref{eq:regular graph}, for all~$k \geq 0$, we have $ \sigma _k = q_k \frac{Dv_k}{\abs{Dv_k} + 1/k}$, so that
$$ \abs{\sigma_k} \leq q_k \quad \text{ in }(0,T)\times\Omega.$$
We easily deduce that for all~$(t, t+h) \subset (0,T)$ and for all subset~$\omega \subset \Omega$ we have
 $$\int_t^{t+h} \int_\omega \abs{\sigma_k} \leq \int_t^{t+h} \int_\omega q_k.$$
But we know (see \ref{cv5} and \ref{cv6}) that~$q_k \rightarrow q \in \L{2}\L{2}$ and~$\sigma_k \rightharpoonup \sigma \in \L{2}\L{2}_{sym}$. As the domain $\Omega$ is bounded, we can take the limit when~$k$ tends to infinity:
$$
\forall (t , t+h) \subset (0,T) \quad  \forall \omega \subset \Omega \qquad \norm{\sigma}_{\L{1}((t,t+h)\times \omega)} \leq \norm{q}_{\L{1}((t,t+h)\times \omega)}.
$$
So we finally conclude that~$\abs{\sigma } \leq q$ a.e. in $(0,T)\times\Omega$.
\paragraph{Step~2 --}
Next steps consist of checking the second relation of~\eqref{bingham12}.
Note first that the difference between $q_k \abs{Dv_k}$ and $\sigma_k:Dv_k$ converges to~$0$ when~$k$ tends to infinity.
Indeed, let's compute the difference:
$$
q_k \abs{Dv_k} - \sigma_k:Dv_k = q_k \frac{1}{k} \frac{\abs{Dv_k}}{\abs{Dv_k} + 1/k},
$$
and thanks to~\eqref{estimate1} the right-hand side is essentially bounded by $ \frac{1}{k} c$, and thus converges to $0$ in $ \L \infty$.
Therefore, it suffices to prove that 
\begin{itemize}
	\item[(a)] $ q_k \abs{Dv_k} \rightharpoonup q \abs{Dv}$ in $\L 1 \L 1$
\end{itemize}
and
\begin{itemize}
	\item[(b)] $\sigma_k:Dv_k  \rightharpoonup \sigma:Dv $ in $\L 1 \L 1$.
\end{itemize}
\paragraph{Step~2~(a) --}
Due to strong-weak convergence results, as~$q_k \rightarrow q \in \L{2}\L{2}$ (see \ref{cv5}) and $Dv_k \rightharpoonup Dv \in \L{2}\L{2}$, see \ref{cv1}, we have
$$q_k \abs{Dv_k} \rightharpoonup q \abs{Dv} \in \L{1}\L{1}.$$
Now we must prove~(b).
\paragraph{Step~2~(b).1 --}
Using $\psi = v_k $ as a test-function in the first equation of~\eqref{model k} and taking into account divergence-free constraint, we obtain
\begin{equation}\label{eq:2.1.1}
\int_\Omega \sigma_k:Dv_k = \left\langle f , v_k \right\rangle_{\V',\V} - \frac1 2 \rho \, \d_t \int_\Omega |v_k |^2 - \eta \int_\Omega | \nabla v_k |^2.
\end{equation}
Integrating~\eqref{eq:2.1.1} with respect to the time between~$0$ and $t \in (0,T)$, we have
\[
\int_0^t \int_\Omega \sigma_k:Dv_k 
=
\int_0^t \left\langle f, v_k \right\rangle_{\V ',\V} + \frac1 2 \rho \, \norm{v_k(0)}_{\L{2}}^2 - \frac1 2 \rho \, \norm{v_k(t)}_{\L{2}}^2  -\eta \int_0^t \norm{\nabla v_k}_{\L{2}}^2.
\]
Taking the limit $k \to + \infty$ (and using the fact that $\displaystyle \liminf_{k \rightarrow \infty} \norm{g_k}_X \geq \norm{g}_X$ as soon as  $g_k \rightharpoonup g \in X$, see~\cite{Bre}), we deduce
\begin{equation}\label{eq:2.2}
\limsup_{k \rightarrow + \infty} \int_0^t \int_\Omega \sigma_k:Dv_k \leq 
\int_0^t \left\langle f, v \right\rangle_{\V ',\V} + \frac 1 2 \rho \, \norm{v_0}_{\L{2}}^2 - \frac 1 2 \rho \, \norm{v(t)}_{\L{2}}^2 - \eta \int_0^t \norm{\nabla v}_{\L{2}}^2 .
\end{equation}
If we now choose $\psi = v $ as a test-function in~\eqref{formulation faible1}$_1$ (see Remark~\ref{3D} for more precision) and use the divergence-free constraint, we obtain
\begin{equation}\label{eq:2.1.2}
\int_\Omega \sigma:Dv  = \left\langle f , v \right\rangle_{\V',\V} -  \frac1 2 \rho \, \d_t \int_\Omega |v|^2 - \eta \int_\Omega | \nabla v |^2.
\end{equation}

\noindent Then we integrate in time the equality~\eqref{eq:2.1.2} from~$0$ to~$t$:
\begin{equation}\label{eq:2.3}
\int_0^t \int_\Omega \sigma:Dv = \int_0^t \left\langle f, v \right\rangle_{\V ',\V} + \frac1 2 \rho\norm{v_0}_{\L{2}}^2 - \frac1 2 \rho\norm{v(t)}_{\L{2}}^2 - \eta \int_0^t \norm{\nabla v}_{\L{2}}^2 .
\end{equation}
Therefore, combining this equation~\eqref{eq:2.3} with the inequality~\eqref{eq:2.2} it follows
\begin{equation}\label{result intermed}
\limsup_{k \rightarrow + \infty} \int_0^t \int_\Omega \sigma_k:Dv_k \leq \int_0^t \int_\Omega \sigma:Dv .
\end{equation}

\paragraph{Step~2~(b).2 --} Now we use the fact that we have an idea of what~$\sigma$ must be. 
Indeed, thanks to~\eqref{bingham1}, when~$Dv$ does not vanish,~$\sigma$ is equal to $ q \frac{Dv}{\abs{Dv}}$, but when~$ Dv$ vanishes we don't know the value of~$ \sigma$. 
The idea is that we can choose zero as a default value. 
In this way, let's define two explicit functions~$\sigma ^* = \sigma ^* (q,A)$ and~$\sigma_k ^* = \sigma_k ^* (q,A)$ on $\mathbb{R} \times S_d(\mathbb{R})$ by
$$
\sigma^*(q,A)   = \left\lbrace \begin{array}{cc} q \frac{A}{\abs{A}} & \quad \mathrm{if} \; A \neq 0 \\ 0 & \mathrm{either,} \end{array}  \right. 
$$
and for any $k \in \mathbb N ^*$,
$$ 
\sigma_k^*(q,A) = \left\lbrace \begin{array}{cc} q \frac{A}{\abs{A}+1/k} & \quad \mathrm{if} \; A \neq 0 \\ 0 & \mathrm{either.} \end{array}  \right. 
$$
By the density of~$ \mathcal{D}((0,T)\times\Omega) $ in~$ \L{2}\L{2}$, we consider a sequence~$(D_i)_{i\geq1} \subset \mathcal{D}((0,T)\times\Omega)$, which satisfies
\begin{itemize}
	\item[\namedlabel{cv7}{(C7)}] $D_i \xrightarrow{i}{} Dv \in \L{2}\L{2}$.
\end{itemize} 
Noting that $\left( \sigma^*(q,D_i) \right) _{i\geq1}$ is bounded in~$ \L{2}\L{2} $, we can extract a sub-sequence (again indexed by $i\geq 1$), which converges in~$ \L{2}\L{2} $. 
Let's denote the limit by~$ \chi $, so that we have 
\begin{equation}
\sigma^*(q,D_i) \rightharpoonup \chi \, \in \L{2}\L{2}.
\label{sigma i-convergence}
\end{equation}
Furthermore, for any integer~$i$, the sequence~$\left( \sigma_k^*(q,D_i) \right) _{k\geq1}$ converges and the limit when~$k$ tends to infinity is~$ \sigma ^* (q,D_i) $.\\
\noindent
In order to prove a convergence in $\L 1 \L 1$, let's fix the index~$i$ and estimate the following quantity as~$k$ tends to infinity:
$$
\begin{aligned}
\int_0^t \int_\Omega (\sigma_k - \sigma_k^*(q_k,D_i)):(Dv_k - D_i) & = \int_0^t \int_\Omega \sigma_k: Dv_k - \int_0^t \int_\Omega \sigma_k :D_i \\
 & + \int_0^t \int_\Omega \sigma_k^*(q_k,D_i) : D_i - \int_0^t \int_\Omega \sigma_k^*(q_k,D_i) : Dv_k .
\end{aligned}
$$
We now look at the convergence for each term in the right-hand side except for the first one:
\begin{itemize}
\item[$\checkmark$] As $\sigma_k \rightharpoonup \sigma \in \L{2}\L{2}$ (see \ref{cv6}), we have 
$$\limsup_{k \rightarrow \infty} \left( - \int_0^t \int_\Omega \sigma_k : D_i \right) = - \int_0^t \int_\Omega \sigma : D_i .$$
\item[$\checkmark$] Thanks to the strong convergence of $(q_k)_{k\geq1}$ in $\L 2 \L 2$, we can say that~$\sigma_k^*(q_k,D_i) \xrightarrow[k]{} \sigma^*(q,D_i) \in \L{2}\L{2}$ and we deduce that 
$$\limsup_{k \rightarrow \infty} \left( \int_0^t \int_\Omega \sigma_k^*(q_k,D_i):D_i   \right) = \int_0^t \int_\Omega \sigma^*(q,D_i):D_i  .$$
\item[$\checkmark$] From convergence~\ref{cv1} and considering again that~$\sigma_k^*(q_k,D_i)\xrightarrow[k]{} \sigma^*(q,D_i) \in \L{2}\L{2} $, we apply the weak-strong convergence theorem to deduce that
$$\limsup_{k \rightarrow \infty} \left( \int_0^t \int_\Omega \sigma_k^*(q_k,D_i):Dv_k   \right) = \int_0^t \int_\Omega \sigma^*(q,D_i):Dv  .$$
\end{itemize}
Therefore, for any positive integer~$i$ we have
$$
\begin{aligned}
\limsup_{k \rightarrow \infty} \left( \int_0^t \int_\Omega (\sigma_k - \sigma_k^*(q_k,D_i)):(Dv_k - D_i)   \right) = & \limsup_{k \rightarrow \infty} \left( \int_0^t \int_\Omega \sigma_k: Dv_k   \right) - \int_0^t \int_\Omega \sigma : D_i  \\
 & + \int_0^t \int_\Omega \sigma^*(q,D_i) : (D_i - Dv)  .
\end{aligned}
$$
Now let~$i$ tend to infinity. 
Recalling that~$(D_i)_{i\geq1}$ converges strongly to~$Dv$ in~$\L{2}\L{2}$ and using~\eqref{sigma i-convergence}, the last term in the right-hand side tends to zero.
Finally, thanks to \eqref{result intermed}, we have
\begin{equation}\label{negativity}
\begin{aligned}
\lim_{i\rightarrow \infty}  \, & \limsup_{k \rightarrow \infty} \left( \int_0^t \int_\Omega (\sigma_k - \sigma_k^*(q_k,D_i)):(Dv_k - D_i)   \right) \\
 & \qquad = \limsup_{k \rightarrow \infty} \left( \int_0^t \int_\Omega \sigma_k: Dv_k \right) - \int_0^t \int_\Omega \sigma : Dv   \leq 0 .
\end{aligned}
\end{equation}

\paragraph{Step~2~(b).3 --} \label{pagelemme}
As  $(\sigma_k^*(q_k,D_i),D_i)$ and $(\sigma_k,Dv_k)$ are in the same graph~$\mathcal{G}_k$, thanks to Lemma~$\ref{graphe positif}$ we have 
$$ (\sigma_k - \sigma_k^*(q_k,D_i)):(Dv_k - D_i) \geq 0 .$$
Therefore, ``$\limsup_k$'' which appears above reads ``$\lim_k$" and inequalities in $ ( \ref{negativity})$ are equalities, \ie
$$
\lim_{i,k\rightarrow \infty} \int_0^t \int_\Omega \left| (\sigma_k - \sigma_k^*(q_k,D_i)):(Dv_k - D_i)   \right| = 0 ,\\
$$
\ie $(\sigma_k - \sigma_k^*(q_k,D_i)):(Dv_k - D_i)$ converges to $0$ in~$\L 1 \L 1$ when $k$ and $i$ tend to infinity, so that for any $\varphi \in \L\infty$, we have 
\begin{equation}
\lim_{i,k\rightarrow \infty} \int_0^t \int_\Omega \left( (\sigma_k - \sigma_k^*(q_k,D_i)):(Dv_k - D_i) \right) \varphi = 0. 
\label{eq:limite L1}
\end{equation}
We now look for the limit of~$\sigma_k^*(q_k,D_i)$. 
Note that for all~$i \geq 1$ and~$k \geq 1$, $\norm{\sigma_k^*(q_k,D_i)}_{\L 2 \L 2} \leq \norm{q}_{\L 2 \L 2}$, then we can chose any index between~$i$ and~$k$ first to compute the limit. 
For more convenience, we begin with~$i$. 
Therefore, we set any~$k \geq 1$ and compute the difference 
$$ 
\begin{aligned}
\sigma_k^*(q_k,D_i) - \sigma_k^*(q_k,Dv) & = q_k\frac{D_i}{\abs{D_i}+1/k} - q_k\frac{Dv}{\abs{D_i}+1/k} 
																						+ q_k\frac{Dv}{\abs{D_i}+1/k} - q_k\frac{Dv}{\abs{Dv}+1/k} \\
																				 & = q_k \left( \frac{D_i-Dv}{\abs{D_i}+1/k} + Dv \frac{ \abs{Dv}-\abs{D_i} }{ (\abs{D_i}+1/k)(\abs{Dv}+1/k) } \right).
\end{aligned}
$$
Thereby, thanks to~\ref{cv7} we have 
$$ \sigma_k^*(q_k,D_i) \xrightarrow{i}{} \sigma_k^*(q_k,Dv) \in \L 2 \L 2 .$$
Then, we can observe that 
$$ \frac{Dv}{\abs{Dv}+1/k} \text{ converges almost surely in } (0,T)\times \Omega \text{ to } \left\{ \begin{array}{ll} \frac{Dv}{\abs{Dv}} & \text{if } Dv \neq 0 \\ 0 & \text{if } Dv = 0 . \end{array} \right. $$
As the sequence $ \left(\frac{Dv}{\abs{Dv}+1/k}\right)_{k\geq1} $ is uniformly bounded by $1$, now let~$k$ tend to infinity and apply Lebesgue's dominated convergence theorem, so that 
$$
\frac{Dv}{\abs{Dv}+1/k} \xrightarrow{k}{} \left\{ \begin{array}{ll} \frac{Dv}{\abs{Dv}} & \text{if } Dv \neq 0 \\ ~~ 0 & \text{if } Dv = 0 \end{array} \right. \in \L 2 \L 2.
$$
Then, thanks to~\ref{cv5} the product~$q_k \frac{Dv}{\abs{Dv} + 1/k} = \sigma_k^*(q_k,Dv)$ converges strongly in~$\L 1 \L 1$:
$$ \sigma_k^*(q_k,Dv) \xrightarrow{k}{} \sigma^*(q,Dv) \in \L 1 \L 1 .$$
But we know that $\sigma_k^*(q_k,Dv) - \sigma^*(q,Dv) $ is uniformly essentially bounded in~$(0,T)\times\Omega$ (see \eqref{estimate1}$_2$).
Hence the difference converging to~$0$ also in $\L 2 \L 2$, and we have
\begin{equation}\label{i-k-convergence}
\sigma_k^*(q_k,D_i) \xrightarrow{i,k}{} \sigma^*(q,Dv) \in \L 2 \L 2 . 
\end{equation}
Combining~\ref{cv1} with~\eqref{i-k-convergence}, we automatically get 
$$ \sigma_k^*(q_k,D_i):(Dv_k - D_i) \underset{i,k}{\rightharpoonup} 0 \in \L 1 \L 1.$$
Finally from \eqref{eq:limite L1} and for any $\varphi \in \L\infty$, the following convergence remains:
$$
\lim_{i,k\rightarrow \infty} \int_0^t \int_\Omega \left( \sigma_k:(Dv_k - D_i) \right) \varphi = 0,
$$
\ie 
$$
\lim_{k\rightarrow \infty} \int_0^t \int_\Omega (\sigma_k:Dv_k) \varphi = \lim_{i,k\rightarrow \infty} \int_0^t \int_\Omega (\sigma_k: D_i) \varphi.
$$
But from \ref{cv6} and \ref{cv7} we compute the limit on the right-hand side above, which is equal to $\int_0^t \int_\Omega (\sigma: Dv) \varphi$.
Thereby, we prove that
$$ \sigma_k : Dv_k \rightharpoonup \sigma : Dv \in \L{1}\L{1}.$$
From result of Step~$2$ (a) and (b), by uniqueness of the limit we have the equality 
$$\sigma:Dv = q \abs{Dv} \text{ a.e.}$$
This proves the second relation of~\eqref{bingham12} so $(\sigma, Dv) \in \mathcal{G}[q]$, which concludes the proof.

\begin{rem}\label{3D}
As for test-function $\psi = v_k$, it is theoretically not possible to choose $\psi = v$ as a test-function in~\eqref{formulation faible1}. 
But in the case of bi-dimensional flows, we have 
$$ \partial_t v \in \L{2}\V'. $$
So if we regularize the velocity~$v$ with respect to the time, say~$v_\varepsilon = \eta_\varepsilon(t) \star v$ and~$\lim_{\varepsilon \rightarrow 0} v_\varepsilon = v$ in~$\L{2}\V$, 
then we can use it instead of~$v$ and take the limit as~$\varepsilon$ tends to~$0$. The result is the one we obtain by formally using~$v$ as a test-function.\\
Note that this method does not work in a three-dimensional case, because we only have
$$ \partial_t v \in \L{\frac{4}{3}}\V'. $$
\end{rem}

\section{An example of flow with variable plasticity yield}\label{section4}

In this section, we provide a numerical illustration of the previous model which highlights the effect of fluidization on the flow. We consider the following model:
\begin{equation}\label{modelsimpli00}
\left\{
\begin{aligned}
& \partial_t v + v \cdot \nabla v + \nabla p - \Delta v = 0.2 \, \div \Big( (1 - p_f)^+ \frac{Dv}{\abs{Dv}} \Big), \\
& \div v = 0, \\[0.2cm]
& \partial_t p_f + v \cdot \nabla p_f - 0.1 \, \Delta p_f = 0.
\end{aligned}
\right.
\end{equation}
In order to illustrate the influence of the diffusion of the fluid pressure~$p_f$, we look at the behaviour of a fluid with such a rheology in a $2$D-channel.
The fluid enters the channel with a Poiseuille-type profile for velocity and flows out freely.
Its rheology depends on the degree of fluidization, which is just given by~\eqref{def q}. 
When testing the behaviour of a simple Bingham fluid in this configuration, we set~$p_f=0$ everywhere, so that $(1-p_f)^+ = 0$.
But when we compute the same flow with a defluidization process, the fluid pressure equals the lithostatic pressure everywhere inside at the beginning and at the inlet, so that the fluid is initially totally fluidized and when entering the channel.
The corresponding dimensionless boundary conditions for the interstitial pore pressure~$p_f$ are given by~$1$ at the inlet and~$0$ (\ie atmospheric pressure) on the other boundaries.
Initial velocity is given by the steady state solution of the equivalent Poiseuille-type problem for a pure viscous fluid and initial fluid pressure depends on the experimental set-up.\\

\noindent
The numerical scheme used to compute the solution we present here is described and analyzed in~\cite{Ch-Du}.
It is a second-order projection method with respect to the time based on the combination of the BDF2 (second-order Backward Differentiation Formula) and the AB2 (Adams--Bashforth). 
The non-linearity due to the Bingham threshold is treated using a projection method (see~\cite{Ch-Du} for details on the numerical analysis). 
A Cartesian uniform mesh is used for the spatial discretisation, and a classical MAC\footnote{Marker And Cell, see~\cite{MAC}} scheme is introduced in order to treat the incompressibility.\\

\noindent
The parameters of the momentum equation for these numerical tests are usual ones (see~\cite{Mura}).
Other concrete examples of this type of model applied to real physical problems represent a work in progress.
We illustrate the influence of the presence of the fluid pressure through the geometry of the rigid zones. 
In the case of a pure Bingham fluid, in the steady state the rigid zone corresponds to the plug in the middle of the channel.
We can observe that in the Figure~\ref{fig:Bingham}, describing the flow corresponding to an established Bingham flow.
We can observe that the Poiseuille flow imposed on the left rapidly becomes a plug flow characterizing the Bingham flow.
\begin{figure}[h!]%
\includegraphics[width=\columnwidth]{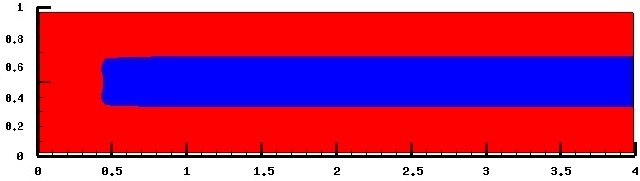}\\ 
\includegraphics[width=\columnwidth]{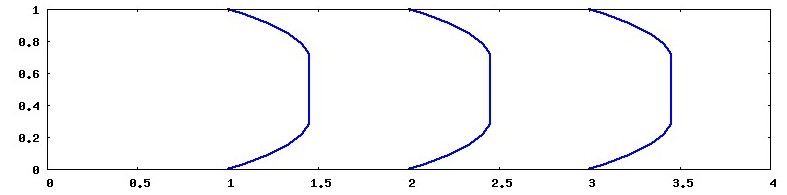}   
\caption{The non-fluidized case (corresponding to the Bingham model) at $t = 30$. 
Top: Rigid zone (in the middle of the channel) developed in the flow. 
Bottom: Horizontal velocity profile at different positions.}%
\label{fig:Bingham}%
\end{figure}

\noindent
In a second simulation, we are looking at in the same quantities (plug zones and velocity profile) for a fluid taking into account the gas pressure.
The rigid zone is thinner where the parameter~$q$ does not vanish, whereas it has quite the same behaviour where~$q = 0$ (see Figure~\ref{fig:fluidised} and~\ref{fig:q}). 
Clearly the plug flow is being established further from the source corresponding to $x=0$.
\begin{figure}[h!]%
\includegraphics[width=\columnwidth]{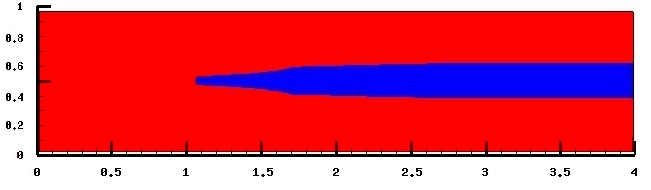}\\ 
\includegraphics[width=\columnwidth]{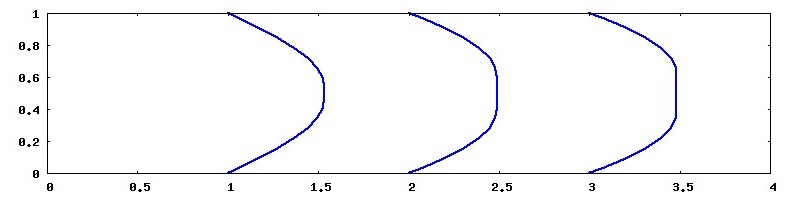}   
\caption{The fluidized case (variable plasticity yield) at $t = 30$. 
Top: Rigid zone (in the middle of the channel) developed in the flow. 
Bottom: Horizontal velocity profile at different positions.}%
\label{fig:fluidised}%
\end{figure}

\begin{figure}[h!]%
\includegraphics[width=\columnwidth]{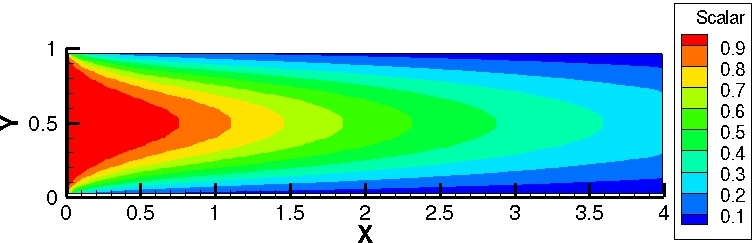}   
\caption{Distribution of the interstitial gas pressure~$p_f$ in the flow of a Bingham fluid at $t=30$.}%
\label{fig:q}%
\end{figure}

\newpage
\section{Conclusions and prospects}\label{section5}

We have established existence of a weak solution to bi-dimensional homogeneous Bingham fluids with variable shear stress, the main new feature of the model being the coupling of the rheology with another partial differential equation through the definition of the order-parameter.
This work is an extension in the field of existence theorems for homogeneous fluids (Navier-Stokes \cite{B-F}, Bingham, Herschel-Bulkley \cite{B-G-M-S}).
In order to compute some free surface-like flows, it would be interesting to complete the study by taking into account a variable density. 
Some results already exist for Bingham~\cite{A-S} and Herschel-Bulkley~\cite{bas} fluids, but only with constant plasticity yield.
Also, following the case treated in~\cite{B-G-M-S}, it seems to be possible to extend the result to the three-dimensional case and thus to include more physical flows.

\section*{Acknowledgments}
\emph{We would like to thank Karim Kelfoun and Olivier Roche (L.M.V.) for their active participation to the development of the ideas of this article, especially in the creation of the model.
This research was partially supported by the French Government Laboratory of Excellence initiative noANR-10-LABX-0006, the R\'egion Auvergne and the European Regional Development Fund. This is Laboratory of Excellence ClerVolc contribution number 218.}

\end{document}